\newtheorem{thm}{Theorem}[section] %[subsection]
\newtheorem{lm}[thm]{Lemma}
\newtheorem{clm}[thm]{Claim}
\newtheorem*{clm*}{Claim}
\theoremstyle{definition}
\numberwithin{equation}{section}
\newcommand{\cproof}{\noindent{\it Proof of Claim.}\ } %used in spf environment
\newcommand{\cqed}{\hfill\rule{1.3mm}{3mm}}
\DeclareMathOperator{\Sub}{\mathsf{Sub}}
\DeclareMathOperator{\Con}{Con}
\begin{document}

\title[A note on $\mathbb M_{p+1}$]{A note on ``A minimal
  congruence \\ lattice representation for $\mathbb M_{p+1}$''}

\author{Keith A. Kearnes}
\address[Keith A. Kearnes]{Department of Mathematics\\
University of Colorado\\
Boulder, CO 80309-0395\\
USA}
\email{kearnes@colorado.edu}

\subjclass[2010]{Primary: 06B15; Secondary: 08A30}

\keywords{}

\begin{abstract}
We reprove a theorem from \cite{BGIT}
%Bunn, Grow, Insall, and Thiem
which asserts that a minimal congruence lattice
representation for $\mathbb M_{p+1}$ has size $2p$,
and is an expansion of a regular $D_{2p}$-set.
\end{abstract}

\maketitle

\setcounter{section}{1}

%\section{}\label{}
I recently had the opportunity to review \cite{BGIT}
for Mathematical Reviews. This paper describes a minimal
congruence lattice representation for $\mathbb M_{p+1}$
via an argument involving
automorphism groups of regular graphs.
After filing my review, I realized that there is a 
short, group-theoretic proof of the result. It is based
on the following:

\begin{lm} \label{lemma}
Let $G$ be a finite group and let $H$ be a subgroup.
Suppose that the subgroup interval $I[H,G]\leq \Sub(G)$
is isomorphic to $\mathbb M_n$.
If $[G:H] < 2n$, then
\begin{enumerate}
\item $H\lhd G$.
\item $G/H$ is a dihedral group.
%  \cong D_{2p}$ (= the symmetry group of the regular $p$-gon).
\item $n=p+1$ for some prime $p$, and $G/H\cong D_{2p}$.
%\item $\Con(\langle G/H; G\rangle) \cong \Sub(D_{2p})\cong \mathbb M_{p+1}$.
\end{enumerate}
\end{lm}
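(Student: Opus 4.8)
The plan is to exploit the index hypothesis through an elementary counting argument, deduce normality of $H$ from it, and then read off the structure of $G/H$ from the fact that its subgroup lattice is $\mathbb{M}_n$.

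First I would record the lattice-theoretic facts forced by $I[H,G]\cong\mathbb{M}_n$: writing $K_1,\dots,K_n$ for the $n$ middle elements (each simultaneously an atom and a coatom), distinct $K_i,K_j$ satisfy $K_i\cap K_j=H$ and $\langle K_i,K_j\rangle=G$, since in $\mathbb{M}_n$ any two distinct atoms meet at the bottom and join to the top. Set $a_i=[K_i:H]\ge 2$. Because the sets $K_i\setminus H$ are pairwise disjoint, the union $\bigcup_i K_i$ has exactly $|H|\bigl(1+\sum_i(a_i-1)\bigr)$ elements, and this is at most $|G|=|H|\cdot[G:H]$. Hence $\sum_i(a_i-1)\le [G:H]-1<2n-1$, so $\sum_i(a_i-1)\le 2n-2$. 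Since each summand is at least $1$, at most $n-2$ of the indices can have $a_i\ge 3$; therefore at least two of the $K_i$, say $K_1$ and $K_2$, satisfy $[K_i:H]=2$. This counting step, which is the only place the hypothesis $[G:H]<2n$ enters, is the heart of the argument.

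Given this, part (1) is immediate: an index-$2$ subgroup is normal, so $H\lhd K_1$ and $H\lhd K_2$, and since $H$ is normalized by both $K_1$ and $K_2$ it is normalized by $\langle K_1,K_2\rangle=G$, whence $H\lhd G$. For parts (2) and (3) I would pass to $Q=G/H$, so that $\Sub(Q)\cong\mathbb{M}_n$ and $|Q|=[G:H]$. The images $\overline{K_1},\overline{K_2}$ are two distinct subgroups of order $2$, hence are generated by two distinct involutions $\bar s,\bar t$, and $\langle\bar s,\bar t\rangle=\overline{\langle K_1,K_2\rangle}=Q$. A group generated by two distinct involutions is dihedral, giving $Q\cong D_{2\ell}$ with $\ell=|\bar s\,\bar t|\ge 2$; this is part (2).

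Finally, for part (3) I would determine which $\ell$ are compatible with $\Sub(D_{2\ell})\cong\mathbb{M}_n$. If $\ell$ had a proper nontrivial divisor, the cyclic rotation subgroup of order $\ell$ would contain a proper nontrivial subgroup, producing a chain of length $3$ below $Q$ and contradicting that $\mathbb{M}_n$ has height $2$; hence $\ell=p$ is prime. For prime $p$ the subgroups of $D_{2p}$ are the trivial group, the rotation subgroup of order $p$, the $p$ reflection subgroups of order $2$, and $D_{2p}$ itself, so $\Sub(D_{2p})\cong\mathbb{M}_{p+1}$ and therefore $n=p+1$ and $G/H\cong D_{2p}$ (the case $p=2$ giving $D_4\cong C_2\times C_2$). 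The one point to treat with care is that the counting bound really must deliver two middle subgroups of index $2$ rather than one; without this the non-dihedral group $C_p\times C_p$ for odd $p$, whose lattice is also $\mathbb{M}_{p+1}$, would survive, and it is exactly excluded by $[G:H]=p^2\not<2(p+1)$.
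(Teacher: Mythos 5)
Your proof is correct and takes essentially the same route as the paper: your counting bound $\sum_i([K_i:H]-1)\le [G:H]-1<2n-1$ is just a reformulation of the paper's coset count showing that two of the $K_i$ have index $2$ over $H$, and the remaining steps (normality of $H$ via $\langle K_1,K_2\rangle=G$, dihedrality from two involutions, and primality of the rotation order from the height-$2$ constraint on $\mathbb M_n$) match the paper's argument step for step. The only additions are cosmetic: your explicit identification of $\Sub(D_{2p})\cong\mathbb M_{p+1}$ and the remark about $C_p\times C_p$, which the paper leaves implicit.
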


\begin{proof}
Assume that the intermediate subgroups between $H$ and $G$
are $K_1,\ldots, K_n$ with $[K_1:H]\leq \cdots \leq [K_n:H]$.

\begin{clm} \label{claim}
$[K_1:H]=[K_2:H]=2$. Hence $H\lhd K_1$ and $H\lhd K_2$.
\end{clm}

Since $K_i\cap K_j=H$ when $i\neq j$,
the only coset of $H$ contained in more than one $K_i$ is $H$
itself. We are claiming that $K_1$ and $K_2$ each contain
exactly one other coset of $H$. If this is false,
then $K_1\setminus H$ contains at least one coset of $H$,
say $k_1H$, while 
$K_i\setminus H$ contains at least
two cosets of $H$ for $i>1$, say $k_{i,1}H$ and $k_{i,2}H$.
This is already too many cosets of $H$, since 
\[
\underbrace{1}_{\textrm{coset $H$}}+
\underbrace{1}_{\textrm{coset $k_1H$}}+
\underbrace{2(n-1)}_{\textrm{cosets $k_{i,1}H$ and $k_{i,2}H$}} = 2n,
\]
and we have assumed $[G:H]<2n$.
$\Box$

Now
%we use Claim~\ref{claim} to complete the proof.
$H\lhd K_1$ and $H\lhd K_2$,
so $H\lhd (K_1\vee K_2) = G$, establishing Item (1).
The group $G/H = (K_1/H)\vee (K_2/H)$ is generated by
two $2$-element subgroups, hence 
by two involutions. This implies that $G/H$ is 
a dihedral group, say
$D_{2m}$, establishing Item (2).
If the rotation subgroup $R\leq D_{2m}$ 
were not simple, then there would exist
a height-3 chain $\{1\}<S<R<D_{2m}$
in the height-2 lattice
$\Sub(D_{2m})\cong \Sub(G/H)\cong \mathbb M_n$. Consequently
$R$ must be simple, so $D_{2m}=D_{2p}$
for some prime $p$. Now
$\mathbb M_n\cong \Sub(G/H)\cong \Sub(D_{2p})\cong \mathbb M_{p+1}$,
so $n=p+1$ for some prime. This establishes Item (3).
\end{proof}

Lemma~\ref{lemma} implies that any ``sufficiently small''
(meaning: size $<2n$)
congruence lattice 
representation of $\mathbb M_n$ arises from
a regular dihedral action.
%, as is explained in the next theorem.

\begin{thm} (Compare with \cite[Theorem~4.6]{BGIT})
Assume that $p$ is a prime.
\begin{enumerate}
\item  If $\mathbb A$
  is a faithful, transitive $G$-set,
  $\Con(\mathbb A)\cong \mathbb M_{p+1}$, and $|A|<2(p+1)$,
  then $G\cong D_{2p}$, $|A| = 2p$, and $\mathbb A$ is a regular $D_{2p}$-set.
  A regular $D_{2p}$-set
is an example of such $\mathbb A$.
\item  If $\mathbb A$ is a finite algebra satisfying
  $\Con(\mathbb A)\cong \mathbb M_{p+1}$ where $p$
  is an odd prime, then $|A|\geq 2p$.  
\end{enumerate}
Hence, a minimal congruence lattice
representation for $\mathbb M_{p+1}$ has size $2p$,
and must be an expansion of a regular $D_{2p}$-set.
\end{thm}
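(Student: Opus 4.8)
The plan is to deduce both items of the Theorem from Lemma~\ref{lemma} by translating statements about congruence lattices of transitive $G$-sets into statements about subgroup intervals, and then to handle the general finite algebra in Item (2) by passing to a minimal transitive representation.

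First I would set up the correspondence for Item (1). If $\mathbb A$ is a faithful transitive $G$-set with point stabilizer $H = G_a$, then the standard Galois correspondence for transitive permutation groups identifies $\Con(\mathbb A)$ with the interval $I[H,G]$ in $\Sub(G)$: congruences on $\mathbb A$ correspond to block systems, which correspond to subgroups between $H$ and $G$. Thus $\Con(\mathbb A)\cong \mathbb M_{p+1}$ forces $I[H,G]\cong \mathbb M_{p+1}$, so we are in the situation of the Lemma with $n = p+1$. The index is $[G:H] = |A| < 2(p+1) = 2n$, exactly the hypothesis $[G:H] < 2n$. Lemma~\ref{lemma} then gives $H \lhd G$ and $G/H \cong D_{2p}$. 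Since $H$ is normal and $\mathbb A$ is faithful, $H$ must be trivial (the kernel of a transitive action is $\bigcap_g gHg^{-1} = H$ when $H \lhd G$), so $G \cong D_{2p}$, $H = \{1\}$, and $|A| = [G:H] = |D_{2p}| = 2p$. A faithful transitive action with trivial stabilizer is exactly the regular action, so $\mathbb A$ is a regular $D_{2p}$-set. Verifying that the regular $D_{2p}$-set really does realize $\mathbb M_{p+1}$ amounts to checking $\Sub(D_{2p}) \cong \mathbb M_{p+1}$, which is the computation already used at the end of the Lemma's proof.

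For Item (2) the idea is to reduce an arbitrary finite algebra to the group-set case. Given a finite algebra $\mathbb A$ with $\Con(\mathbb A) \cong \mathbb M_{p+1}$, I would argue by contradiction: suppose $|A| < 2p$. Because $\mathbb M_{p+1}$ is simple and has atoms, $\mathbb A$ is a minimal (abelian, in fact) algebra in a strong sense, and tame congruence theory tells us that an algebra whose congruence lattice is $\mathbb M_{p+1}$ with $p$ odd cannot be of affine or Boolean type in a way that would shrink the relevant action; more concretely, one passes to the permutation group generated by the unary polynomials and the induced transitive action on $A$, whose congruence lattice is still $\mathbb M_{p+1}$. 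This yields a faithful transitive $G$-set on the underlying set, with $|A| < 2p < 2(p+1)$, so Item (1) applies and forces $|A| = 2p$, contradicting $|A| < 2p$. Hence $|A| \geq 2p$.

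The final sentence then combines the two items: Item (2) shows no representation of $\mathbb M_{p+1}$ can have fewer than $2p$ elements, and Item (1) shows that any representation achieving $|A| = 2p$ (which is forced to be transitive and faithful once we are at the minimum) must be an expansion of the regular $D_{2p}$-set; the regular $D_{2p}$-set exhibits that $2p$ is attained. I expect the main obstacle to be Item (2): the clean Galois correspondence of Item (1) is available only for $G$-sets, so the real work is justifying the reduction from a general finite algebra to a transitive group action without losing control of the congruence lattice or the size. This is where one needs the structure theory (tame congruence theory, or an explicit analysis of the clone of unary polynomials) to guarantee that the algebra's action is essentially that of a permutation group on $A$, and the oddness of $p$ is presumably what rules out the degenerate small cases.
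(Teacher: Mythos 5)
Your Item (1) is correct and is essentially the paper's argument: identify $\Con(\mathbb A)$ with the interval $I[H,G]$ for a point stabilizer $H$, apply Lemma~\ref{lemma} with $n=p+1$, and use faithfulness (core-freeness of $H$, which equals its own core once it is normal) to force $H=\{1\}$, giving the regular $D_{2p}$-action.

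Item (2), however, has a genuine gap, and it sits exactly where you predicted the ``main obstacle'' would be. The reduction from an arbitrary finite algebra to a faithful, transitive $G$-set is not something you can get by ``passing to the permutation group generated by the unary polynomials'': in a general finite algebra the unary polynomials need not be permutations, and even if you restrict to the invertible ones, the resulting $G$-set has \emph{more} congruences than $\mathbb A$ (fewer operations means a larger congruence lattice), so the claim that its congruence lattice ``is still $\mathbb M_{p+1}$'' is unjustified and false in general; likewise the claimed transitivity of that action and the assertion that $\mathbb A$ is ``minimal (abelian, in fact)'' do not follow from $\Con(\mathbb A)\cong\mathbb M_{p+1}$. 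What actually fills this hole is the theorem of P\'alfy and Pudl\'ak \cite{ppp}: the lattice $\mathbb M_n$ satisfies their Properties (A), (B), (C) whenever $n>3$, and for such lattices every \emph{minimal} congruence lattice representation is an expansion of a faithful, transitive $G$-set on the same universe which is itself a minimal representation. This also repairs a second defect in your set-up: the reduction applies only to minimal representations, so the argument must run through one (if some representation had size $<2p$, a minimal one would too, and its transitive reduct would contradict Item (1)), rather than being applied directly to the hypothetical small algebra. Finally, this pinpoints the role of oddness of $p$, which you left as a guess: it guarantees $n=p+1>3$ so that the P\'alfy--Pudl\'ak properties hold; for $p=2$ the conclusion genuinely fails, since $\mathbb M_3\cong\Pi_3$ is the congruence lattice of a three-element set with no operations, of size $3<2p=4$.
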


\begin{proof}
For Item (1), we may assume that 
$\mathbb A = \langle G/H; G\rangle$ where $H$ is a core-free
subgroup of $G$. We have $I[H,G]\cong \Con(\mathbb A)\cong \mathbb M_{p+1}$,
and $[G:H]=|A|<2(p+1)$, so by
Lemma~\ref{lemma} in the case $n=p+1$,
and by
the core-freeness of $H$,
it follows that $H=\{1\}$, $G\cong D_{2p}$,
$|A|=|G|=|D_{2p}|=2p$, and $\mathbb A$ is a regular $D_{2p}$-set.
Any regular $D_{2p}$-set meets all of the conditions on $\mathbb A$.

Item (1) shows
that it is possible to find an algebra $\mathbb A$ representing
$\mathbb M_{p+1}$ where $|A|=2p$. We now argue that
any representation satisfying $|A|\leq 2p$ satisfies $|A|=2p$.
When $n>3$ the lattice $\mathbb L=\mathbb M_n$ satisfies
the ``P\'{a}lfy-Pudl\'{a}k Properties (A), (B), (C)'' from \cite{ppp},
which guarantee that if $\mathbb A$ is a finite algebra such that
$\Con(\mathbb A)\cong \mathbb L$ is a minimal
congruence lattice representation, then $\mathbb A$
is an expansion of a faithful, transitive $G$-set $\mathbb A^{\circ}$
that is also a minimal representation of $\mathbb L$.
%which satisfies $\Con(\mathbb A^{\circ})\cong \mathbb L$.
When $p$ is an odd prime and $n=p+1$ we do have $n>3$,
so any minimal
%congruence lattice
representation $\mathbb A$ for $\mathbb M_{p+1}$
is an expansion of a faithful, transitive $G$-set $\mathbb A^{\circ}$
that is also a minimal
%congruence lattice
representation for $\mathbb M_{p+1}$.
We are assuming that $|A|\leq 2p \;\; (< 2(p+1))$, so 
applying Item (1) to the reduct $\mathbb A^{\circ}$ we get $|A|=2p$
and $\mathbb A$ is an expansion of a regular $D_{2p}$-set $\mathbb A^{\circ}$.
\end{proof}
  
\bibliographystyle{plain}

\end{document}